\documentclass[final,3p,times]{elsarticle}

\usepackage{amsmath,amssymb,amsthm}
\usepackage{hyperref}
\usepackage{microtype}
\usepackage{cleveref}
\usepackage[T1]{fontenc}
\usepackage[utf8]{inputenc}
\newtheorem{theorem}{Theorem}

\begin{document}
\makeatletter
\def\ps@pprintTitle{%
  \let\@oddhead\@empty
  \let\@evenhead\@empty
  \let\@oddfoot\@empty
  \let\@evenfoot\@empty
}
\makeatother
\begin{frontmatter}

\title{Lifespan Lower Estimates for a Strongly Damped Semilinear Wave Equation}

\author[FST]{Firas Kaabi}
\address[FST]{Faculty of Sciences of Tunis, University of Tunis El Manar}

\begin{abstract}
We consider a strongly damped semilinear wave equation with initial data prescribed as $(\varrho\phi,\varrho h)$, where the profiles are fixed and only the amplitude $\varrho>0$ is allowed to vary. The question addressed here is how this rescaling affects a guaranteed lower bound for the maximal existence time. We show that the solution exists at least on a time interval of length comparable to $\varrho^{-(p-2)}$. The proof is based on the growth of a quadratic phase-space norm: after the source term is estimated by the relevant Sobolev embedding, the problem reduces to a scalar differential inequality. The constants produced in the argument are independent of $\varrho$, so the dependence on the initial amplitude remains explicit throughout.
\end{abstract}
\begin{keyword}
lifespan estimate \sep finite-time blow-up \sep strongly damped wave equation \sep amplitude-scaled initial data \sep energy method

\MSC[2020] 35L05 \sep 35B44 \sep 35B40 \sep 35L71
\end{keyword}

\end{frontmatter}

\section{Introduction}

The dynamics of damped wave equations are shaped by two competing effects. The damping term is meant to dissipate the motion, while the nonlinear source can reinforce the solution and, for large enough growth, lead to finite-time growth beyond the continuation regime. This opposition has been studied extensively in the literature on global existence, blow-up and decay, using tools such as energy inequalities, potential-well decompositions, concavity arguments and stabilization estimates; see \cite{Sattinger1968,Levine1974,Matsumura1976,Nakao1981,Haraux1985,Messaoudi2001,Messaoudi2003,Alabau2005,Alabau2010,Gazzola2006,Gerbi2011,LevineParkSerrin1998}.

In the present work, the profiles themselves are not changed. We prescribe the initial data by multiplying a fixed pair $(\phi,h)$ by the same amplitude $\varrho>0$, and we ask how this single parameter affects the time for which the solution is guaranteed to exist. The result is therefore not intended to provide a new blow-up condition. Its role is different: it gives an explicit lower bound, written directly in terms of $\varrho$, before the loss of boundedness described in \eqref{eq:blowup-assumption} can occur.

Let $D\subset\mathbb{R}^n$ be a bounded domain with smooth boundary. The equation considered in this paper is
\begin{equation}
\begin{cases}
\psi_{tt}-\Delta \psi-a\Delta \psi_t+b\psi_t=|\psi|^{p-2}\psi,
& \text{in } D\times(0,T),\\
\psi=0,
& \text{on } \partial D\times(0,T),\\
\psi(\cdot,0)=\varrho\phi,\qquad \psi_t(\cdot,0)=\varrho h,
& \text{in } D.
\end{cases}
\label{eq:problem}
\end{equation}
The parameter $\varrho$ is positive and measures the common amplitude of the initial displacement and velocity. The fixed profiles are assumed to satisfy
\begin{equation}
\phi\in H_0^1(D),\qquad h\in L^2(D),
\qquad \varrho>0.
\label{eq:initial}
\end{equation}
The coefficients in the damping part are taken so that
\begin{equation}
a\ge0,\qquad b>-a\lambda_1,
\label{eq:coeff}
\end{equation}
where $\lambda_1$ is the first eigenvalue of the Dirichlet Laplacian on $D$.

The exponent of the source is chosen so that the Sobolev embeddings used below are available. More precisely, we impose
\begin{equation}
2<p<\infty \qquad \text{if } n=1,2,
\label{eq:p1}
\end{equation}
and, for dimensions $n\ge3$,
\begin{equation}
2<p\le \frac{2n-2}{n-2}.
\label{eq:p2}
\end{equation}

Lower estimates for blow-up times often come from comparison procedures. One introduces an auxiliary functional, derives a differential inequality, and then obtains a lower bound that may remain written in integral form. This is the case, for instance, in the result of Sun, Guo and Gao \cite{SunGuoGao2014}, where the lower estimate is expressed through an integral quantity depending on the initial energy and the parameters of the comparison argument. Such a bound is useful, but it does not immediately provide a simple security interval when the whole initial state is rescaled by a single amplitude. The point of the present paper is to make this dependence explicit: for data $(\varrho\phi,\varrho h)$, the guaranteed time is written directly as a power of $\varrho$. Related lifespan questions for critical or scale-invariant wave equations can be found in \cite{TakamuraWakasa2014,Takamura2015,Wakasugi2014,LaiTakamuraWakasa2017}.

The proof is arranged to keep the amplitude visible throughout the estimate, in contrast with more elaborate quantitative bounds for strongly damped wave equations such as \cite{Bchatnia2026}. We measure the solution by a quadratic phase-space norm and use the admissible Sobolev embedding only to control the source term. Once this is done, the whole argument is reduced to a scalar inequality whose initial value contains the factor $\varrho^2$. After integration, this is exactly what produces the power $\varrho^{-(p-2)}$. No potential-well decomposition or auxiliary comparison functional is needed.

The lower bound is stated and proved in Section~\ref{sec:main}. Section~\ref{sec:conclusion} closes the paper with a short summary.

\section{Main result}\label{sec:main}

Before the theorem is stated, let us point out the mechanism behind the estimate. Once the profiles $\phi$ and $h$ are fixed, the amplitude $\varrho$ enters the proof only through the initial size of the solution. The later estimates involve the domain, the damping coefficients and the Sobolev constant, but not a new power of $\varrho$. Thus the final dependence on the amplitude can be read directly from the initial value of the quadratic quantity used below.

\begin{theorem}\label{thm:main}
Let the assumptions \eqref{eq:initial}--\eqref{eq:p2} be satisfied, and assume that the prescribed profiles are nontrivial in the sense that
\begin{equation}
\|\nabla \phi\|_2^2+\|h\|_2^2>0.
\label{eq:nontrivial-data}
\end{equation}
Consider a solution $\psi$ of \eqref{eq:problem}, defined on its maximal interval $[0,T^*)$, such that
\begin{equation}
\psi\in C([0,T^*);H_0^1(D)),
\qquad
\psi_t\in C([0,T^*);L^2(D)).
\label{eq:solution-class}
\end{equation}
Assume moreover that any finite endpoint of this interval is detected by the loss of the $L^p(D)$ bound, namely
\begin{equation}
\lim_{t\uparrow T^*}\|\psi(t)\|_{L^p(D)}=\infty.
\label{eq:blowup-assumption}
\end{equation}
Then the maximal time satisfies the lower estimate
\begin{equation}
T^*\ge C\varrho^{-(p-2)},
\label{eq:main-lower-bound}
\end{equation}
where $C>0$ may depend on $D$, $p$, $a$, $b$, $\phi$ and $h$, but not on the amplitude $\varrho$.
\end{theorem}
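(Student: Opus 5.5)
Define the quadratic phase-space quantity
\[
Y(t)=\|\psi_t(t)\|_2^2+\|\nabla\psi(t)\|_2^2 ,
\]
which at $t=0$ equals $\varrho^2 Y_0$, where $Y_0:=\|\nabla\phi\|_2^2+\|h\|_2^2>0$ by \eqref{eq:nontrivial-data}. The plan is to derive a differential inequality $Y'\le K_1 Y+K_2 Y^{p/2}$ with constants $K_1,K_2$ independent of $\varrho$, and then integrate it.

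\textbf{Energy identity.} Multiply \eqref{eq:problem} by $\psi_t$ and integrate over $D$. For solutions in the class \eqref{eq:solution-class} this step is formal. I would justify it in the usual way, through a Galerkin approximation or by regularizing and passing to the limit, and obtain the integrated version of
\[
\tfrac12 Y'(t)+a\|\nabla\psi_t\|_2^2+b\|\psi_t\|_2^2=\int_D|\psi|^{p-2}\psi\,\psi_t\,dx .
\]
The damping terms are then handled as follows.
\begin{itemize}
\item If $b\ge 0$, both damping terms are nonnegative and can be dropped.
\item If $b<0$, Poincar\'e's inequality gives $a\|\nabla\psi_t\|_2^2+b\|\psi_t\|_2^2\ge(a\lambda_1+b)\|\psi_t\|_2^2>0$, using \eqref{eq:coeff}. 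Here one needs $\psi_t\in H_0^1$ when $a>0$. This is where the strong damping regularizes, and it is part of the justification step.
\item In every case one may simply keep a bound of the form $-b\|\psi_t\|_2^2\le |b|\,Y$ if needed.
\end{itemize}

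\textbf{Source term.} By H\"older's inequality, the right-hand side is at most $\|\psi\|_{2(p-1)}^{p-1}\|\psi_t\|_2$. Conditions \eqref{eq:p1}--\eqref{eq:p2} say exactly that $2(p-1)\le 2n/(n-2)$, so the Sobolev embedding $H_0^1\hookrightarrow L^{2(p-1)}$ holds with a constant $S$. Hence
\[
\int_D|\psi|^{p-2}\psi\,\psi_t\,dx\le S^{p-1}\|\nabla\psi\|_2^{p-1}\|\psi_t\|_2\le S^{p-1}Y^{p/2}.
\]
Combining the pieces gives
\[
Y'\le 2|b|\,Y+2S^{p-1}Y^{p/2},
\]
with $|b|$ replaced by $0$ when $b\ge0$. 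To make the power structure clean I would restrict to $Y\ge \varrho^2Y_0$, or absorb the linear term. A simple route is to set $Z=e^{-2|b|t}Y$, which gives $Z'\le 2S^{p-1}e^{|b|(p-2)t}Z^{p/2}$. On $t\le1$ this becomes $Z'\le K Z^{p/2}$.

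\textbf{Integration.} Integrate $\frac{d}{dt}Z^{-(p-2)/2}\ge -\frac{p-2}{2}K$ to get
\[
Z(t)^{-(p-2)/2}\ge(\varrho^2Y_0)^{-(p-2)/2}-\tfrac{p-2}{2}Kt .
\]
So $Y$ stays bounded, by $2^{2/(p-2)}\varrho^2Y_0e^{2|b|}$ say, for
\[
t\le \min\bigl\{1,\ \tau\bigr\},\qquad \tau=\frac{Y_0^{-(p-2)/2}}{(p-2)K}\,\varrho^{-(p-2)} .
\]
On that interval Sobolev gives $\|\psi\|_p\le S'\|\nabla\psi\|_2$ bounded. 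By \eqref{eq:blowup-assumption}, $T^*$ cannot lie in that interval, so $T^*\ge\min\{1,c\varrho^{-(p-2)}\}$.

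\textbf{Main obstacle.} The remaining issue is the cap $\min\{1,\cdot\}$. For large $\varrho$ the bound $c\varrho^{-(p-2)}$ is the smaller term, and the result holds as stated. For small $\varrho$ one needs the linear growth rate to not spoil the power. When $b\ge0$ there is no linear term, and the bound is exactly $C\varrho^{-(p-2)}$ for all $\varrho$.

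When $b<0$, I would first try to avoid $|b|$ entirely. Use the modified functional $\|\psi_t\|_2^2+\|\nabla\psi\|_2^2$, with the dissipation $(a\lambda_1+b)\|\psi_t\|_2^2\ge0$ from Poincar\'e, which needs $a>0$. If $a=0$ and $b<0$, then $b>-a\lambda_1$ forces $b>0$, a contradiction. So $b<0$ implies $a>0$, the damping term is nonnegative, and the linear term disappears in all admissible cases.

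Thus the real technical burden is the rigorous justification of the energy identity and of $\psi_t\in L^2(0,t;H_0^1)$ for the given solution class.
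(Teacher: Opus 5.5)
Your proposal is correct and is essentially the paper's proof. It uses the same quantity $\|\nabla\psi\|_2^2+\|\psi_t\|_2^2$ and the same energy identity, with the damping dropped via $a\|\nabla\psi_t\|_2^2+b\|\psi_t\|_2^2\ge(a\lambda_1+b)\|\psi_t\|_2^2\ge0$. It then applies the same H\"older--Sobolev bound through $L^{2p-2}$ to get $Y'\le CY^{p/2}$, and integrates $Y^{-(p-2)/2}$ from the initial value $\varrho^2(\|\nabla\phi\|_2^2+\|h\|_2^2)$, exactly as the paper does.

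The exponential-weight detour and the $\min\{1,\cdot\}$ cap are unnecessary: as you yourself conclude, no linear term ever appears under \eqref{eq:coeff}. Your ending by contradiction on $[0,\tau]$ is equivalent to the paper's step of letting $\mathcal{Z}(t)\to0$ as $t\uparrow T^*$.
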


\begin{proof}
We begin with the phase-space quantity
\begin{equation}
\mathcal{Y}(t)
=
\|\nabla \psi(t)\|_2^2+\|\psi_t(t)\|_2^2,
\qquad 0\le t<T^*.
\label{eq:Y-def}
\end{equation}
The restrictions on $p$ ensure the continuous embedding
$H_0^1(D)\hookrightarrow L^p(D)$. Hence,
\begin{equation}
\|\psi(t)\|_p
\le C\|\nabla \psi(t)\|_2
\le C\mathcal{Y}(t)^{1/2},
\qquad 0\le t<T^*.
\label{eq:Lp-control}
\end{equation}
Thus, under the continuation-loss assumption \eqref{eq:blowup-assumption}, the quantity $\mathcal{Y}$ cannot remain bounded near $T^*$; more precisely,
\begin{equation}
\mathcal{Y}(t)\to\infty
\qquad \text{as } t\uparrow T^*.
\label{eq:Y-blowup}
\end{equation}

We now control the possible increase of $\mathcal{Y}$. At the level of smooth approximations, we multiply the equation in \eqref{eq:problem} by $2\psi_t$ and integrate over $D$. This gives
\begin{equation}
\mathcal{Y}'(t)
=
2\int_D \psi_t(\psi_{tt}-\Delta\psi)\,dx.
\label{eq:Yprime-basic}
\end{equation}
For a solution with the regularity stated in \eqref{eq:solution-class}, the identity is justified by a density argument. More precisely, one first works with Galerkin approximations, for which the calculation is legitimate term by term. The resulting identity is then passed to the limit in the energy topology
$$
C([0,T];H_0^1(D))\times C([0,T];L^2(D)).
$$
This standard approximation procedure allows the energy relation to be used without assuming extra smoothness of the solution; see \cite{Lions1969}.

Using the equation itself, the term inside the integral in \eqref{eq:Yprime-basic} can be written as
\begin{equation}
\psi_{tt}-\Delta\psi
=
a\Delta\psi_t-b\psi_t+|\psi|^{p-2}\psi .
\label{eq:equation-rewritten}
\end{equation}
We therefore obtain
\begin{equation}
\mathcal{Y}'(t)
=
-2a\|\nabla\psi_t(t)\|_2^2
-2b\|\psi_t(t)\|_2^2
+
2\int_D |\psi|^{p-2}\psi\,\psi_t\,dx .
\label{eq:Yprime-full}
\end{equation}
The two linear terms do not create growth. In fact, the assumption \eqref{eq:coeff}, combined with Poincar\'e's inequality, gives
\begin{equation}
a\|\nabla\psi_t(t)\|_2^2+b\|\psi_t(t)\|_2^2
\ge
(a\lambda_1+b)\|\psi_t(t)\|_2^2
\ge0.
\label{eq:damping-positive}
\end{equation}
Consequently, after dropping this nonnegative dissipative contribution, the derivative of $\mathcal{Y}$ is bounded only by the source term:
\begin{equation}
\mathcal{Y}'(t)
\le
2\int_D |\psi|^{p-1}|\psi_t|\,dx .
\label{eq:Yprime-source}
\end{equation}
We now estimate the source term. H\"older's inequality gives
\begin{equation}
\int_D |\psi|^{p-1}|\psi_t|\,dx
\le
\|\psi(t)\|_{2p-2}^{p-1}\|\psi_t(t)\|_2.
\label{eq:holder}
\end{equation}
The exponent $2p-2$ is the reason for the restriction imposed on $p$. Indeed, when $n\ge3$, condition \eqref{eq:p2} is equivalent to the admissibility of the embedding
\begin{equation}
H_0^1(D)\hookrightarrow L^{2p-2}(D).
\label{eq:sobolev-2p}
\end{equation}
In dimensions one and two, this embedding holds for every finite Lebesgue exponent. Hence, in all admissible cases,
\begin{equation}
\|\psi(t)\|_{2p-2}
\le C\|\nabla\psi(t)\|_2.
\label{eq:L2p-control}
\end{equation}
Combining \eqref{eq:Yprime-source}, \eqref{eq:holder} and \eqref{eq:L2p-control}, we obtain
\begin{equation}
\mathcal{Y}'(t)
\le
C\|\nabla\psi(t)\|_2^{p-1}\|\psi_t(t)\|_2.
\label{eq:Yprime-gradient}
\end{equation}
Both factors on the right-hand side are controlled by $\mathcal{Y}(t)^{1/2}$, namely
\begin{equation}
\|\nabla\psi(t)\|_2\le \mathcal{Y}(t)^{1/2},
\qquad
\|\psi_t(t)\|_2\le \mathcal{Y}(t)^{1/2}.
\label{eq:Y-components}
\end{equation}
Thus the previous estimate reduces to the scalar inequality
\begin{equation}
\mathcal{Y}'(t)
\le
C\mathcal{Y}(t)^{p/2},
\qquad 0<t<T^*.
\label{eq:Y-differential}
\end{equation}
The constant $C$ may change from line to line, but it depends only on the fixed quantities in the problem and never on the amplitude $\varrho$.

We integrate \eqref{eq:Y-differential} by applying it to the decreasing power
\begin{equation}
\mathcal{Z}(t)=\mathcal{Y}(t)^{-\frac{p-2}{2}}.
\label{eq:Z-def}
\end{equation}
For almost every $t<T^*$, differentiation gives
\begin{equation}
\mathcal{Z}'(t)
=
-\frac{p-2}{2}\mathcal{Y}(t)^{-\frac p2}\mathcal{Y}'(t)
\ge -C.
\label{eq:Zprime}
\end{equation}
Therefore,
\begin{equation}
\mathcal{Z}(t)\ge \mathcal{Z}(0)-Ct,
\qquad 0<t<T^*.
\label{eq:Z-integrated}
\end{equation}
Since \eqref{eq:Y-blowup} implies $\mathcal{Z}(t)\to0$ as $t\uparrow T^*$, passing to the endpoint in \eqref{eq:Z-integrated} yields
\begin{equation}
T^*\ge C\mathcal{Z}(0).
\label{eq:T-lower-Z}
\end{equation}

It only remains to write this initial quantity in terms of $\varrho$. From \eqref{eq:problem},
\begin{equation}
\mathcal{Y}(0)
=
\|\nabla(\varrho\phi)\|_2^2+\|\varrho h\|_2^2
=
\varrho^2\bigl(\|\nabla\phi\|_2^2+\|h\|_2^2\bigr).
\label{eq:Y0}
\end{equation}
Consequently,
\begin{equation}
\mathcal{Z}(0)
=
\varrho^{-(p-2)}
\bigl(\|\nabla\phi\|_2^2+\|h\|_2^2\bigr)^{-\frac{p-2}{2}}.
\label{eq:Z0}
\end{equation}
The profile-dependent factor is finite and strictly positive by \eqref{eq:nontrivial-data}. Combining \eqref{eq:T-lower-Z} with \eqref{eq:Z0} proves \eqref{eq:main-lower-bound}.
\end{proof}

\section{Conclusion}\label{sec:conclusion}

We have proved a lower bound that displays explicitly the effect of the amplitude in the initial data. For the strongly damped semilinear wave equation studied here, the choice of initial state $(\varrho\phi,\varrho h)$ gives a maximal existence time bounded from below by a constant multiple of $\varrho^{-(p-2)}$. The constant may depend on the domain, the coefficients, the exponent and the fixed profiles, but it is independent of the scaling parameter.

The argument relies on a single phase-space quantity. The damping assumption gives the linear part the required sign, while the source term is controlled by the Sobolev embedding associated with the admissible exponent range. The rest of the proof is reduced to the integration of a scalar differential inequality. In this form, the result gives a direct guaranteed existence interval in terms of the amplitude and avoids the implicit integral form that often appears in lower blow-up-time estimates.

\end{document}